\theoremstyle{theorem}
\newtheorem{theorem}{Theorem}
\theoremstyle{definition}
\begin{document}

\title{Formula of Volume of Revolution with Integration by Parts and Extension}
\markright{Volume of Revolution and Extension}
\author{Yi Liu , Jingwei Liu}

\maketitle

\begin{affil}
School of Mathematics and Statistics, Beijing Institute of Technology ,Beijing 100081,P.R China\\
Email : liuy0605@126.com.
\end{affil}
\begin{affil}
School of Mathematics and System Sciences, Beihang University, Beijing, 100083,P.R China\\
Email : liujingwei03@tsinghua.org.cn.
\end{affil}

\begin{abstract}
A calculation formula of volume of revolution with integration by parts of definite integral is derived based on monotone
function, and extended to a general case that curved trapezoids is determined by continuous, piecewise strictly monotone and differential
function. And, two examples are given, ones curvilinear trapezoids is determined by Kepler equation, and the other curvilinear trapezoids is a function transmuted from Kepler equation.
\end{abstract}

\noindent
In [1] (p494,ex71), if $f(x)$ is one-to-one and therefore has an inverse function $x=g(y)$, where $x \in [a,b]$,$a>0$ and $y\in [c,d]$. Let S be
the solid obtained by rotating about the y-axis the region bounded by $y =f(x)$ [where $f(x)\ge 0$], $y=0$, $x=a$ and $x=b$ (which is also called
curvilinear trapezoid [2]). Using cylindrical shells and slicing method, the volume of the given solid is
\begin{equation}
\label{eq1}
V=\int_a^b {2\pi xf(x)dx}=\pi b^2d - \pi a^2c - \int_c^d {\pi [g(y)]^2dy}
\end{equation}

This formula can also be proved by integration by parts with the increasing  case of $f$. However, there is slight divarication in formula (1).
\begin{enumerate}
\item[i)] If we guarantee the condition $y \in [c,d]$, formula (1) does not hold in case of decreasing $f$, as under this condition $c=f(b)$, $d=f(a)$.
\item[ii)] If we ensure that the condition $c=f(a)$, $d=f(b)$ is established regardless of increasing or decreasing of $f$, the condition $y\in [c,d]$ does not hold in case of decreasing of $f$, as $c>d$. However, formula (1) still holds in mathematical style.
\end{enumerate}

\section{Volume of Revolution and Extension.}

A reasonable expression would be as follows.
\begin{theorem} Let $y=f(x)$ be nonnegative, continuous, differential and strictly monotone on [a,b] ($a>0$), it's inverse function is $x=g(y)$, $y
\in [c,d]$. Then,
\begin{equation}
\label{eq2}
\pi\int_c^d{[g(y)]^2dy}=\mbox{sgn}(f(b)-f(a))\left\{{\pi[{b^2f(b)-a^2f(a)}]-2\pi\int_a^b {xf(x)dx}}\right\}
\end{equation}
\begin{equation}
\label{eq3}
\pi\int_a^b{[f(x)]^2dx}=\mbox{sgn}(g(d)-g(c))\left\{{\pi[{d^2g(d)-c^2g(c)}]-2\pi\int_c^d {yg(y)dy}}\right\}
\end{equation}
where $y=\mbox{sgn}(x)$ is the sign function.
\end{theorem}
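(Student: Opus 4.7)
My plan is to reduce equation (\ref{eq2}) to the substitution $y=f(x)$ after one integration by parts, and then obtain equation (\ref{eq3}) by the same argument with the roles of $f$ and $g$ interchanged. The sign function will absorb the orientation mismatch that arises when $f$ is decreasing.

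First I would apply integration by parts to $\int_a^b xf(x)\,dx$ with $u=f(x)$ and $dv=x\,dx$, giving
\begin{equation*}
2\pi\int_a^b xf(x)\,dx \;=\; \pi\bigl[b^2f(b)-a^2f(a)\bigr] \;-\; \pi\int_a^b x^2 f'(x)\,dx.
\end{equation*}
Rearranging, the quantity inside the braces on the right-hand side of (\ref{eq2}) is exactly $\pi\int_a^b x^2 f'(x)\,dx$. Since $x=g(f(x))$ on $[a,b]$, this equals $\pi\int_a^b [g(f(x))]^2 f'(x)\,dx$, which is primed for the substitution $y=f(x)$, $dy=f'(x)\,dx$.

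The main (and essentially only) subtlety is keeping track of endpoints under this substitution, and this is where the $\operatorname{sgn}$ factor earns its keep. If $f$ is strictly increasing then $f(a)=c$, $f(b)=d$, and $\operatorname{sgn}(f(b)-f(a))=+1$, so the substitution yields $\pi\int_c^d [g(y)]^2 dy$ directly. If $f$ is strictly decreasing then $f(a)=d$, $f(b)=c$, and $\operatorname{sgn}(f(b)-f(a))=-1$; the substitution now produces $\pi\int_d^c [g(y)]^2 dy=-\pi\int_c^d [g(y)]^2 dy$, and the leading $-1$ from the sign function exactly cancels this minus sign. Either way (\ref{eq2}) holds.

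For (\ref{eq3}), I would observe that the hypotheses are symmetric in $f$ and $g$: since $f$ is continuous, differentiable and strictly monotone on $[a,b]$ with $a>0$ and $f\ge 0$, the inverse $g$ is continuous, differentiable and strictly monotone on $[c,d]$ (respectively $[d,c]$), and $g\ge a>0$ there, so $g$ satisfies precisely the hypotheses imposed on $f$ in the theorem. Applying the argument above with $g$ in place of $f$ and the interval $[c,d]$ (or $[d,c]$) in place of $[a,b]$ yields (\ref{eq3}). The only point I would pause to check is that this swap is legitimate when $f$ is decreasing, in which case one should interpret the integral $\int_c^d$ with $c>d$ in the usual oriented sense; this is the same convention the theorem has already committed to, so no new issue arises.
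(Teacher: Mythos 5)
Your proof is correct and is essentially the paper's own argument run in reverse: both rest on the integration by parts $2\pi\int_a^b xf(x)\,dx = \pi[b^2f(b)-a^2f(a)] - \pi\int_a^b x^2f'(x)\,dx$ combined with the substitution $y=f(x)$, with the case split on increasing versus decreasing $f$ producing the $\mbox{sgn}$ factor. Your appeal to the $f$--$g$ symmetry for the second identity merely compresses the paper's cases (III) and (IV), which it writes out explicitly.
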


\begin{proof}

(I) Let $y=f(x)$ be strictly monotone increasing(fig1),then $c=f(a)$, $d=f(b)$, $c<d$, and
\begin{equation} \label{eq4}
\begin{array}{ll}
 V_Y &=\displaystyle \pi \int_c^d {[g(y)]^2dy}=\pi \int_a^b {x^2df(x)}=\pi \left[{x^2f(x)\vert_a^b - \int_a^b {f(x)dx^2}}\right] \\
     &=\displaystyle \pi \left[ {b^2f(b) -a^2f(a)} \right] - 2\pi \int_a^b {xf(x)dx} \\
 \end{array}
\end{equation}

\begin{figure}[!htbp]
\begin{center}
\begin{minipage}{6.2cm}
\includegraphics[width=6cm,height=4cm]{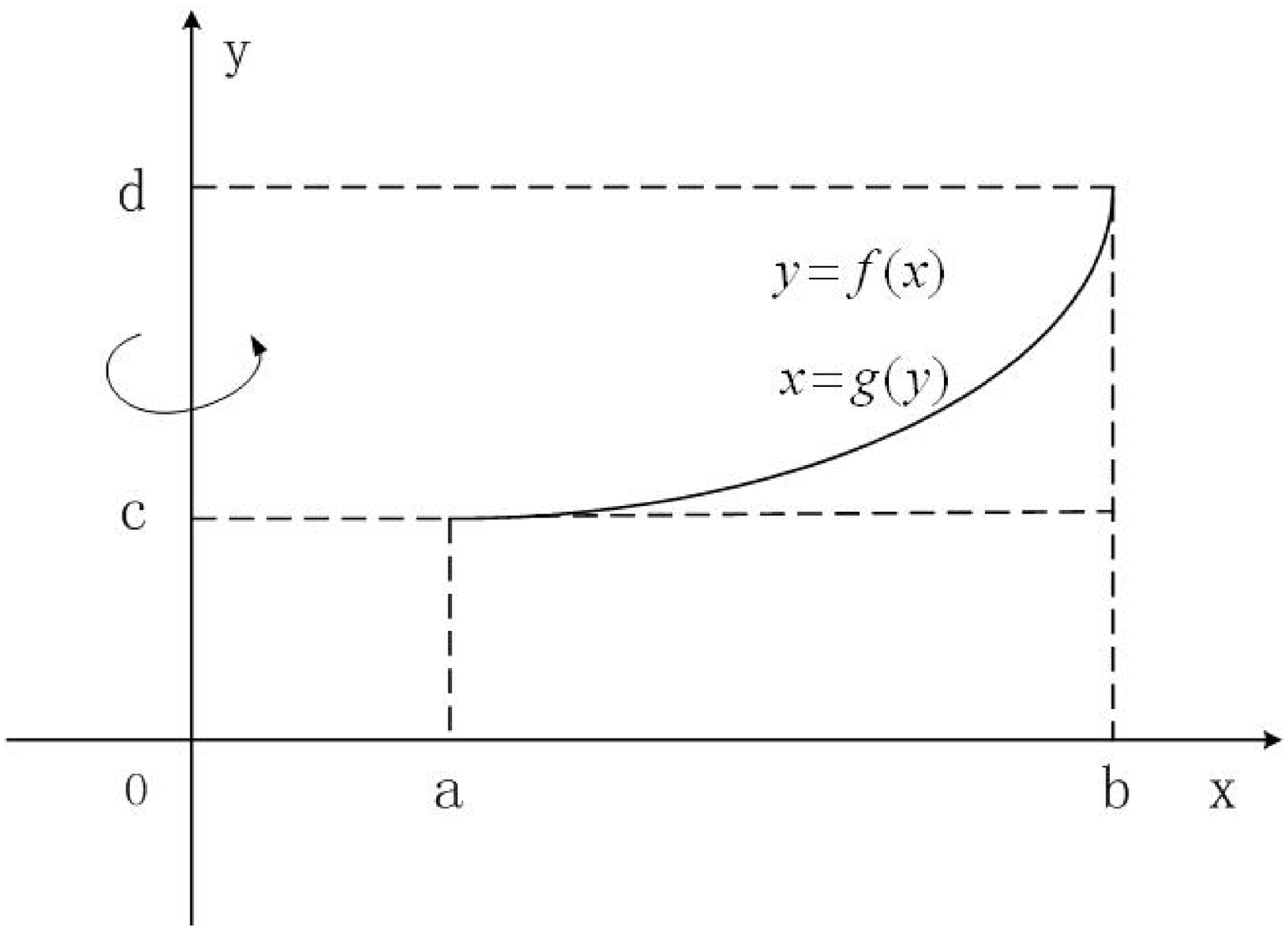}
\label{fig1}
\caption{$y=f(x)$ is strictly monotone increasing.}
\end{minipage}
\begin{minipage}{6.2cm}
\includegraphics[width=6cm,height=4cm]{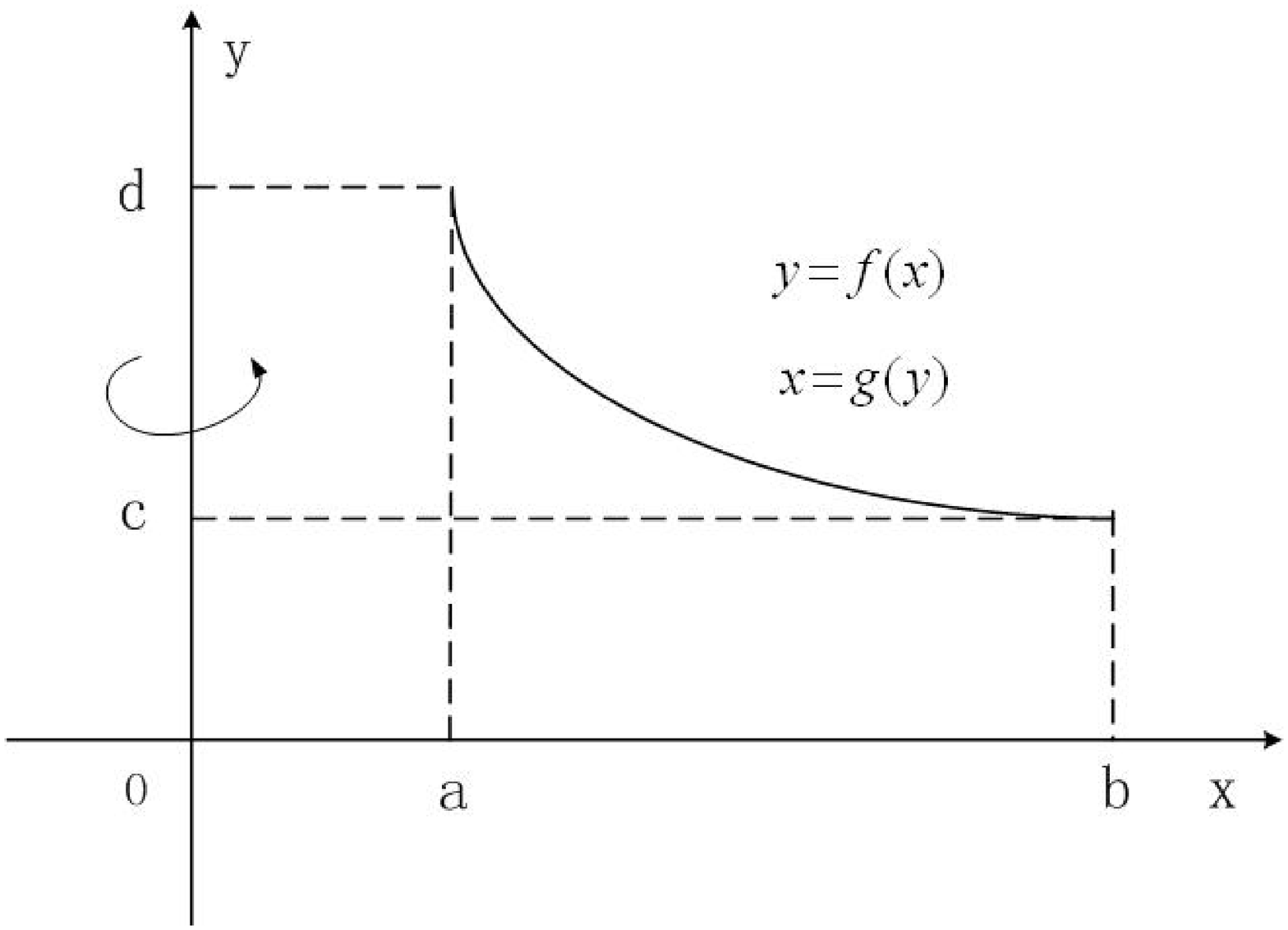}
\label{fig2}
\caption{$y=f(x)$ is strictly monotone decreasing.}
\end{minipage}
\end{center}
\end{figure}

(II) Let $y=f(x)$ be strictly monotone decreasing (fig 2), then $d =f(a)$, $c=f(b)$, ($c<d$), and
\begin{equation} \label{eq5}
\begin{array}{ll}
 V_Y &=\displaystyle \pi\int_c^d {[g(y)]^2dy}=\pi \int_b^a {x^2df(x)}=-\pi \int_a^b{x^2df(x)}\\
     &=\displaystyle -\pi\left[{x^2f(x)\vert_a^b -\int_a^b {f(x)dx^2}} \right]\\
    &=\displaystyle  2\pi \int_a^b {xf(x)dx} - \pi \left[{b^2f(b) - a^2f(a)} \right] \\
 \end{array}
\end{equation}

Similarly, we can prove the other two cases of volume of solids of revolution with integration by parts as follows.

(III) If $x=g(y)$ is strictly monotone increasing (fig 1), then $a=g(c)$, $b=g(d)$, ($a<b$), and
\begin{equation} \label{eq6}
\begin{array}{ll}
 V_X &=\displaystyle \pi \int_a^b {[f(x)]^2dx}=\pi \int_c^d {y^2dg(y)}=\pi \left[{y^2g(y)\vert_c^d-\int_c^d {g(y)dy^2}} \right] \\
    &=\displaystyle \pi \left[ {d^2g(d) -c^2g(c)} \right]-2\pi \int_c^d {yg(y)dy} \\
 \end{array}
\end{equation}

(IV) If $x=g(y)$ is strictly monotone decreasing (fig 2), then $b=g(c)$, $a=g(d)$, ($a<b$), and
\begin{equation} \label{eq7}
\begin{array}{ll}
 V_X &=\displaystyle \pi \int_a^b {[f(x)]^2dx}=\pi \int_d^c {y^2dg(y)}=- \pi \int_c^d {y^2dg(y)}\\
     &=\displaystyle -\pi \left[ {y^2g(y)\vert_c^d -\int_c^d {g(y)dy^2}}\right]   \\
     &=\displaystyle 2\pi \int_c^d {yg(y)dy} - \pi \left[{d^2g(d) - c^2g(c)} \right] \\
 \end{array}
\end{equation}

Hence, this ends the proof.
\end{proof}

If $y=f(x)$ is continuous and not strictly monotone on $[a,b]$, and $[a,b]$ can be divided into several strictly monotone intervals of $f(x)$, we have the
following lemma.

\textbf{Lemma 1.} Suppose that $y=f(x)$is continuous and piecewise strictly monotone on $[a,b]$, if $f(a)\ne f(b)$, $\min \{f(a),f(b)\}<f(x)<
\max\{f(a),f(b)\}$, $x\in (a,b)$, and $f(x)$ is not strictly monotone on $[a,b]$, the number of local extreme value points in $(a,b)$ is even.

\begin{proof} We prove the lemma in two cases, $f(a)<f(b)$, and $f(a)>f(b)$.

In case of $f(a)<f(b)$, since $f(x)>f(a)$, $x\in (a,b)$, the nearest extreme value point to $x=a$ is a local maximum value point.
Since $f(x)<f(b)$, $x\in (a,b)$, the nearest extreme value point to $x=b$ is a local minimum value point. As $y=f(x)$ is continuous and piecewise strictly monotone on $[a,b]$, the number of extreme value points should be even. OR, using method of reduction to absurdity, if the number of extreme values was odd, the last extreme value point (nearest extreme value point to $x=b$) might be a local maximum extreme value point, it is a contradiction.

Similarly, we can prove the other case. Hence, we complete the proof.
\end{proof}

Then, the volume of curvilinear trapezoidal composed by curved edge $y=f(x)$ in term of y-axis, and rotating with y-axis can reach the following conclusion.

\begin{theorem} Let $y=f(x)$ be continuous, piecewise strictly monotone and differential on $[a,b]$. $\Gamma $is the curve determined by $y=f(x)$ on $[a,b]$. Denote $c=\min (f(a),f(b))$, $d=\max (f(a),f(b))$. Suppose that both $y=c$ and $y=d$ $(c<d)$ intersect $y=f(x)$ in only one point
on $[a,b]$ respectively. Then the volume of the plane graph along $x=0$, $y=c$, $\Gamma$, $y=d$ rotating with $y$-axis is
\begin{equation} \label{eq8}
V_Y=\mbox{sgn}(f(b) - f(a))\left\{ {\pi \left[ {b^2f(b) - a^2f(a)} \right] - 2\pi\int_a^b {xf(x)dx}}\right\}
\end{equation}
\end{theorem}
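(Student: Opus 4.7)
The plan is to partition $[a,b]$ at the interior extrema of $f$, apply Theorem~1 on each strictly monotone subinterval, and then recombine with appropriate signs. By Lemma~1, the hypotheses $c<f(x)<d$ on $(a,b)$ and $f(a)\neq f(b)$ force the interior extrema $x_1<\cdots<x_{2k}$ of $f$ to be even in number and to alternate between local maxima and minima. Setting $x_0=a$, $x_{2k+1}=b$, writing $g_i$ for the inverse of $f|_{[x_i,x_{i+1}]}$, and letting $h_i=\min\{f(x_i),f(x_{i+1})\}$, $H_i=\max\{f(x_i),f(x_{i+1})\}$, and $\epsilon_i=\mbox{sgn}(f(x_{i+1})-f(x_i))$, I will first treat the case $f(a)<f(b)$, in which $\epsilon_0,\epsilon_1,\dots,\epsilon_{2k}$ alternate as $+,-,+,\dots,+$; the opposite case follows by reversing the signs throughout, which is exactly what the factor $\mbox{sgn}(f(b)-f(a))$ in (8) records.

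The algebraic step is Theorem~1 applied to each piece, which gives
\[
 \epsilon_i\,\pi\!\int_{h_i}^{H_i}g_i(y)^2\,dy
   =\pi\bigl[x_{i+1}^2 f(x_{i+1})-x_i^2 f(x_i)\bigr]
      -2\pi\!\int_{x_i}^{x_{i+1}}xf(x)\,dx.
\]
Summing over $i=0,\dots,2k$, the boundary terms telescope to $\pi[b^2f(b)-a^2f(a)]$ and the shell integrals concatenate into $2\pi\int_a^b xf(x)\,dx$, so the right-hand side of (8) equals the signed sum $\sum_i\epsilon_i\,\pi\int_{h_i}^{H_i}g_i(y)^2\,dy$.

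The geometric step, which I expect to be the main obstacle, is to show that this signed sum really is $V_Y$. I would slice the solid at every height $y\in[c,d]$: the horizontal line meets $\Gamma$ at the abscissae $g_i(y)$ for precisely those pieces with $h_i\le y\le H_i$. The key combinatorial observation is that as $y$ sweeps past a local maximum or minimum of $f$, exactly two consecutive pieces become inactive simultaneously and their signs $\epsilon_i,\epsilon_{i+1}$ are opposite; hence at every height the active pieces form an ordered list $i_1<\cdots<i_m$ of odd length whose signs still alternate $+,-,+,\dots,+$. Consequently the enclosed planar region cuts the horizontal line in the union $[0,g_{i_1}(y)]\cup[g_{i_2}(y),g_{i_3}(y)]\cup\cdots\cup[g_{i_{m-1}}(y),g_{i_m}(y)]$, and revolving about the $y$-axis turns this into a disk plus washers of total area $\pi(g_{i_1}^2-g_{i_2}^2+\cdots+g_{i_m}^2)=\pi\sum_j\epsilon_{i_j}g_{i_j}(y)^2$. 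Integrating over $y\in[c,d]$ and exchanging the sum with the integral gives $V_Y=\sum_i\epsilon_i\,\pi\int_{h_i}^{H_i}g_i(y)^2\,dy$, and combining this with the algebraic identity above yields (8) for $f(a)<f(b)$; the case $f(a)>f(b)$ follows by the same argument with all $\epsilon_i$ reversed.
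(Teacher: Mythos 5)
Your proposal is correct and follows the same route as the paper: partition $[a,b]$ at the interior extrema (using Lemma~1 to see there are evenly many), apply Theorem~1 on each strictly monotone piece, and observe that the boundary terms telescope while the integrals $2\pi\int_{x_i}^{x_{i+1}}xf(x)\,dx$ concatenate. The one place you go beyond the paper is the step you correctly flag as the main obstacle: identifying $V_Y$ with the signed sum $\sum_i\epsilon_i\,\pi\int_{h_i}^{H_i}g_i(y)^2\,dy$. The paper simply asserts the alternating decomposition $V_Y=V_{\Gamma_0}-V_{\Gamma_1}+\cdots+(-1)^nV_{\Gamma_n}$ ``using mathematical induction'' without argument, whereas your horizontal-slice analysis --- at each height $y$ an odd number of pieces are active with alternating signs, the cross-section is a disk plus washers of area $\pi\bigl(g_{i_1}^2-g_{i_2}^2+\cdots+g_{i_m}^2\bigr)$, and pieces are retired in adjacent oppositely-signed pairs as $y$ crosses an extreme value --- actually justifies it. So the two arguments buy the same theorem by the same decomposition, but yours closes the paper's main logical gap; the paper's version is shorter only because it leaves that geometric identity unproved.
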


\begin{proof} According to Lemma 1, since $y=f(x)$ is continuous, and strictly piecewise monotone on $[a,b]$, assume that the strictly piecewise
monotone intervals of $y=f(x)$ are $[x_i,x_{i+1}]$, $i=0,1,\cdots,n$, $x_0=a$, $x_{n+1}=b$, $i=1,2,\cdots,n$, and $[a,b]=\bigcup\limits_{i= 0}^n {[x_i ,x_{i + 1} ]} $, where $x_1 , \cdots ,x_n$ are extreme value points of $f$.

\begin{figure}[!htbp]
\begin{center}
\begin{minipage}{6cm}
\includegraphics[width=6cm,height=4cm]{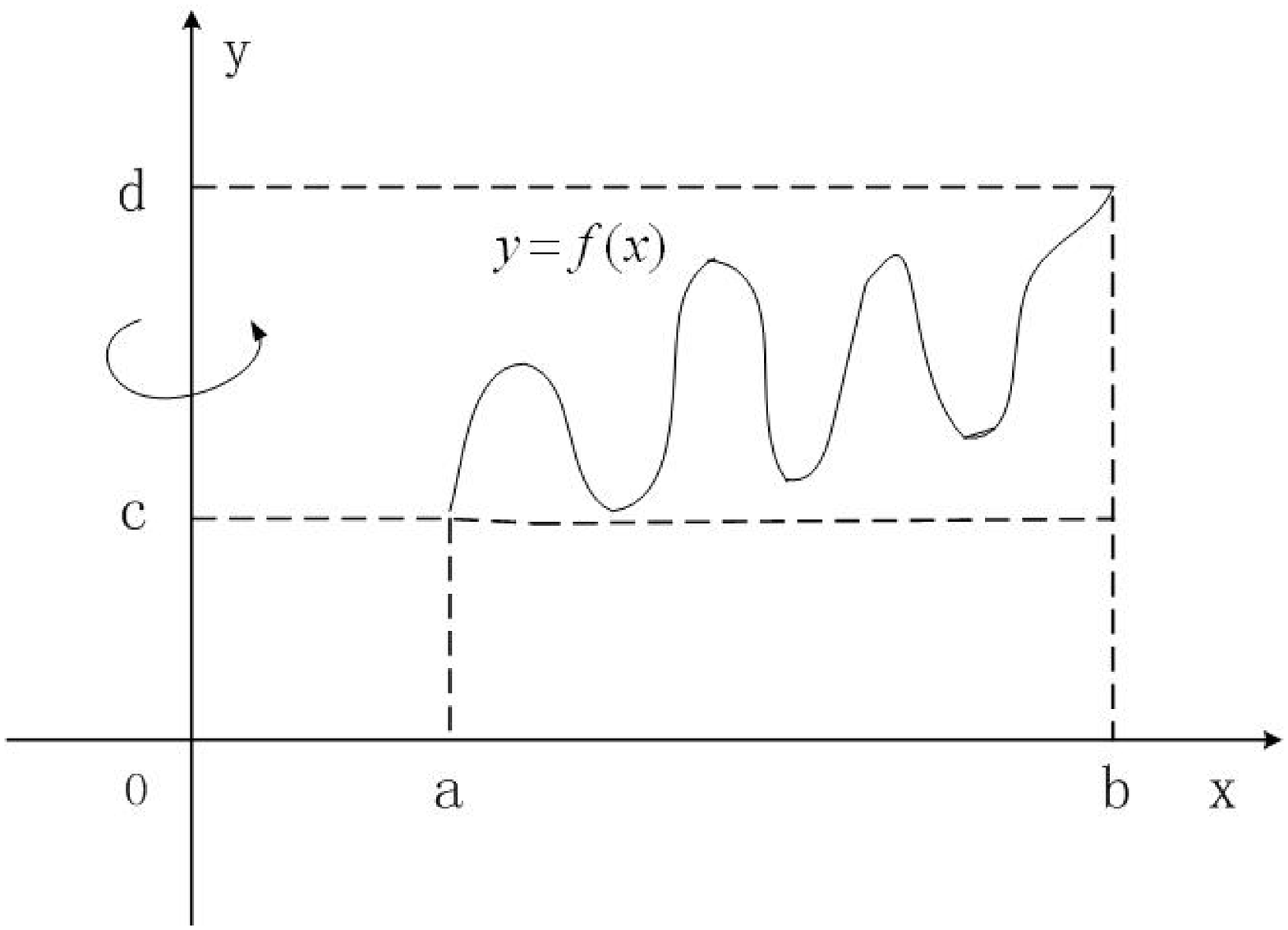}
\label{fig3}
\caption{$f$ is continuous and piecewise strictly monotone with $f(b)>f(a)$.}
\end{minipage}
\quad
\begin{minipage}{6cm}
\includegraphics[width=6cm,height=4cm]{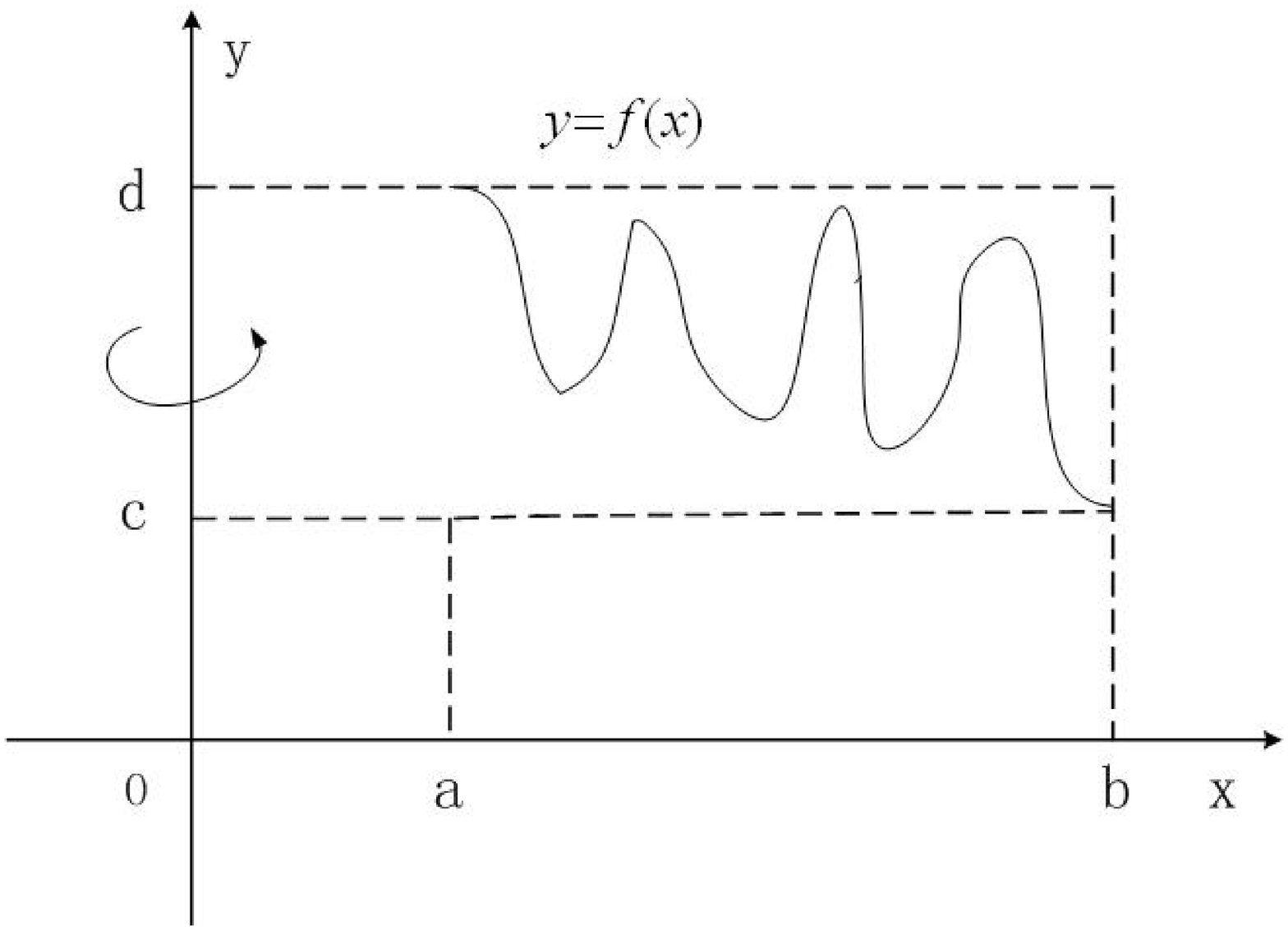}
\label{fig4}
\caption{$f$ is continuous and piecewise strictly monotone with $f(b)<f(a)$.}
\end{minipage}
\end{center}
\end{figure}

(1) If $f(b)>f(a)$ (fig3), as both $y=c$ and $y=d$ $(c<d)$ intersect $y= f(x)$ only once on $[a,b]$ respectively, $y=f(x)$ is strictly
increasing on both $[x_0,x_1]$ and $[x_n,x_{n+1} ]$, where $n$ is even (if $n=0$, Theorem 2 degrades to Theorem 1) and $c=f(a)$, $d=f(b)$.

Denote $\Gamma_i :y=f(x),x \in [x_i ,x_{i + 1} ]$, and $V_{\Gamma_i}$ is the volume of curved trapezoid $\Gamma_i $ with $y$ -axis rotating
with $y$-axis. Since $y=f(x)$ is strictly monotone and differential on $[x_i, x_{i+1}]$, Theorem 1 would be employed in the volume of solids of revolution on each curvilinear trapezoidal determined by piecewise strictly monotone and differential curve.
Then,
\begin{equation} \nonumber
\begin{array}{ll}
V_{\Gamma_{2k}} &=\displaystyle \pi [x_{2k+1}^2 f(x_{2k+1} )-x_{2k}^2 f(x_{2k} )]-2\pi \int_{x_{2k} }^{x_{2k + 1}}{xf(x)dx}\\
V_{\Gamma_{2k+1}} &=\displaystyle 2\pi \int_{x_{2k+1} }^{x_{2k+2}}{xf(x)dx}-\pi [x_{2k+2}^2 f(x_{2k+2} )-x_{2k+1}^2 f(x_{2k+1})] \\
                  & \quad  \quad \quad \quad \quad \quad \quad  \quad \quad \quad \quad \quad k=0,1,\cdots,n/2. \\
\end{array}
\end{equation}

Using mathematical induction, we can obtain,
\begin{equation} \label{eq9}
\begin{array}{ll}
 V_Y &=\displaystyle V_{\Gamma_0}- V_{\Gamma_1}+ V_{\Gamma_2}+ \cdots + (-1)^nV_{\Gamma_n }=\sum\limits_{i=0}^n {(- 1} )^{i - 1}V_{\Gamma_i}\\
   &=\displaystyle \sum\limits_{i= 0}^n {\left\{ {\pi [x_{i + 1}^2 f(x_{i + 1} ) - x_i^2 f(x_i )] - 2\pi\int_{x_i }^{x_{i + 1}}{xf(x)dx}}\right\}} \\
   &=\displaystyle \pi [x_{n +1}^2 f(x_{n+1} ) - x_0^2 f(x_0 )] - 2\pi \int_{x_0 }^{x_{n+1} }{xf(x)dx} \\
   &=\displaystyle \pi [b^2f(b) -a^2f(a)] - 2\pi \int_a^b {xf(x)dx} \\
 \end{array}
\end{equation}

(2) If $f(a)>f(b)$ (fig4), as both $y=c$ and $y=d$ $(c<d)$ intersect $y=f(x)$ one point on $[a,b]$ respectively. Then $y=f(x)$ is strictly
decreasing on $[x_0 ,x_1]$ and $[x_n,x_{n+1}]$, where $n$ is even, and $d=f(a)$, $c=f(b)$.

Denote $\Gamma_i :y=f(x),x\in [x_i,x_{i+1}]$,and $V_{\Gamma_i}$ is the volume of curvilinear trapezoid of $\Gamma_i $ in term of $y$-axis rotating
with $y$-axis. Then,
\begin{equation} \nonumber
\begin{array}{ll}
V_{\Gamma_{2k}}   &=\displaystyle 2\pi \int_{x_{2k} }^{x_{2k+1}}{xf(x)dx} - \pi[x_{2k+1}^2 f(x_{2k+1} ) - x_{2k}^2 f(x_{2k})]\\
V_{\Gamma_{2k+1}} &=\displaystyle \pi [x_{2k+2}^2 f(x_{2k+2})-x_{2k+1}^2 f(x_{2k+1})] -2\pi \int_{x_{2k+1} }^{x_{2k+2}}{xf(x)dx}\\
                  & \quad  \quad \quad \quad \quad \quad \quad  \quad \quad \quad \quad \quad k=0,1,\cdots,n/2. \\
\end{array}
\end{equation}

Using mathematical induction, we can obtain,
\begin{equation} \label{eq10}
\begin{array}{ll}
 V_Y &=\displaystyle V_{\Gamma_0}- V_{\Gamma_1}+ V_{\Gamma_2}+ \cdots + (-1)^nV_{\Gamma_n }=\sum\limits_{i=0}^n {(- 1} )^{i - 1}V_{\Gamma_i }\\
   &=\displaystyle \sum\limits_{i= 0}^n {\left\{ {2\pi \int_{x_i }^{x_{i + 1}}{xf(x)dx} - \pi [x_{i + 1}^2f(x_{i + 1} ) - x_i^2 f(x_i )]} \right\}} \\
   &=\displaystyle 2\pi \int_{x_0}^{x_{n+1}}{xf(x)dx} - \pi [x_{n+1}^2 f(x_{n+1} ) - x_0^2 f(x_0 )]\\
   &=\displaystyle 2\pi \int_a^b {xf(x)dx} - \pi [b^2f(b) - a^2f(a)] \\
 \end{array}
\end{equation}

To sum up formulae(9)(10), the theorem is completely proved.
\end{proof}

Similarly, we can obtain the following theorem:

\begin{theorem} Let $x=g(y)$ be continuous, piecewise strictly monotone and differential on $[c,d]$. $\Gamma $ is the curve determined by $x
=g(y)$ on $[c,d]$. Denote $a=\min (g(c),g(d))$, $b=\max (g(c),g(d))$. Suppose that both $x=a$ and $x=b$ $(a<b)$ intersect $x=g(y)$ in only one point on
$[c,d]$ respectively. Then the volume of the plane graph along $y=0$, $x=a$, $\Gamma $ and $x=b$ rotating with $x$-axis is
\begin{equation} \label{eq11}
V_X=\mbox{sgn}(g(d) - g(c))\left\{ {\pi \left[ {d^2g(d) - c^2g(c)} \right] - 2\pi \int_c^d {yg(y)dy}}\right\}
\end{equation}
\end{theorem}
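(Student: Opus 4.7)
The strategy is to mirror the proof of Theorem~2 step by step, swapping the roles of $x$ and $y$ and invoking parts (III) and (IV) of Theorem~1 in place of parts (I) and (II). The symmetry of the setup (curve $x=g(y)$ rotated about the $x$-axis versus curve $y=f(x)$ rotated about the $y$-axis) means no essentially new ideas are required, only careful relabeling.

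First I would apply Lemma~1 with $g$ playing the role of $f$ and $[c,d]$ playing the role of $[a,b]$. The hypothesis that $x=a$ and $x=b$ each intersect the curve in exactly one point on $[c,d]$, together with $a=\min(g(c),g(d))$ and $b=\max(g(c),g(d))$, forces $\min(g(c),g(d)) < g(y) < \max(g(c),g(d))$ for $y\in(c,d)$, putting us in Lemma~1's setting. Hence the interior extrema $y_1<\cdots<y_n$ of $g$ are even in number, and we obtain a partition $c=y_0<y_1<\cdots<y_n<y_{n+1}=d$ on which $g$ is strictly monotone on each subinterval, with monotonicity alternating from piece to piece.

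Next I would split into the two cases $g(d)>g(c)$ and $g(d)<g(c)$. In the first case the first and last pieces are strictly increasing, so (since $n$ is even) $g$ is increasing on the even-indexed pieces and decreasing on the odd-indexed ones; applying Theorem~1(III) to each increasing piece and Theorem~1(IV) to each decreasing piece yields explicit expressions for $V_{\Gamma_{2k}}$ and $V_{\Gamma_{2k+1}}$ that differ only by an overall sign. As in Theorem~2, the total revolved volume equals the alternating signed sum $V_X=\sum_{i=0}^{n}(-1)^{i-1}V_{\Gamma_i}$, the sign alternation reflecting the fact that when $g$ reverses direction the swept solid doubles back on itself and overlapping contributions must be subtracted. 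Each summand carries the form $\pm\bigl(\pi[y_{i+1}^2g(y_{i+1})-y_i^2g(y_i)]-2\pi\int_{y_i}^{y_{i+1}} yg(y)\,dy\bigr)$ with signs chosen so that after combining, the interior boundary terms $y_i^2g(y_i)$ cancel pairwise and the integrals concatenate across $[c,d]$, leaving $\pi[d^2g(d)-c^2g(c)]-2\pi\int_c^d yg(y)\,dy$. The case $g(d)<g(c)$ is entirely symmetric and produces the same expression with a flipped sign, which is exactly what the $\mbox{sgn}(g(d)-g(c))$ factor records.

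The main point to watch is the sign bookkeeping: verifying the parity of $n$, confirming that the first and last pieces share the same direction of monotonicity as the net sign $g(d)-g(c)$, and checking that the alternating sum really does produce the clean telescoping. Once Theorem~1 and Lemma~1 are in hand, however, these checks are essentially the same as in Theorem~2 with the letters relabeled, so the only real labor is careful transcription.
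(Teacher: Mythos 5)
Your proposal matches the paper exactly: the paper itself offers no separate argument for this theorem, stating only that it follows ``similarly'' to Theorem 2 by interchanging the roles of $x$ and $y$ and invoking parts (III) and (IV) of Theorem 1 on each monotone piece, which is precisely the relabeling, alternating-sum, and telescoping argument you describe. No further comment is needed.
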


\section{Application.}

\textbf{Example 1} For Kepler equation [3]: $y=x + \varepsilon \sin y$, $\varepsilon \in (0,1)$, $y\in [0,2\pi]$. (fig 5)
(i) To calculate the volume of solids of revolution of curved trapezoid composed by Kepler curve, $y=0,y=2\pi$ and $y$-axis rotating with
 $y$-axis.
(ii) To calculate the volume of solids of revolution of curved trapezoid composed by Kepler curve, $x=0,x=2\pi$ and $x$-axis rotating
with $x$-axis.

\textbf{Solution}  (i) For $x=g(y)=y-\varepsilon \sin y$,$\varepsilon \in (0,1)$,
\begin{equation} \nonumber
\begin{array}{ll}
 V_Y &=\displaystyle \pi \int_0^{2\pi}{[g(y)]^2dy}=\pi \int_0^{2\pi}{[y -\varepsilon \sin y]^2dy} \\
   &=\displaystyle [\frac{y^3}{3} +2\varepsilon (y\cos y - \sin y) + \varepsilon ^2(\frac{y}{2} - \frac{\sin2y}{4})]\vert_0^{2\pi}\\
   &=\displaystyle \frac{8}{3}\pi ^3 + (4\varepsilon +\varepsilon ^2)\pi \\
 \end{array}
\end{equation}

(ii) Since $\displaystyle \frac{dx}{dy}=1-\varepsilon\cos y>0$, the curve determined by Kepler function $x=g(y)=y-\varepsilon \sin y$ is strictly increasing in term of $y$ on $[0,2\pi]$. According to inverse function theorem of monotone continuous function, it's inverse function $y=f(x)$ exists in term of $x$ on $[0,2\pi]$ and is strictly increasing. Then, the function $y=f(x)$ determined by Kepler function is an implicit function and also transcendental equation.

Using formula(3), $a=0$, $b=2\pi$, $c=0$, $d=2\pi$, we obtain
\begin{equation} \nonumber
\begin{array}{ll}
 V_X &=\displaystyle \pi \int_0^{2\pi}{[f(x)]^2dx}=8\pi ^4 - 2\pi \int_0^{2\pi}{yg(y)dy} \\
   &=\displaystyle 8\pi ^4 - 2\pi \int_0^{2\pi}{y(y - \varepsilon \sin y)dy} \\
   &=\displaystyle 8\pi ^4 - 2\pi [\frac{y^3}{3} + \varepsilon (y\cos y - \sin y)]\vert_0^{2\pi}\\
   &=\displaystyle \frac{8}{3}\pi ^4 + 4\varepsilon \pi ^2 \\
 \end{array}
\end{equation}

\textbf{Example 2} To calculate the volume of revolution of curvilinear trapezoid: $y=f(x)=\displaystyle \frac{x}{\pi}+ \sin x$ ($0 \le x \le 2\pi )$, $y=0$, $y=2\pi $ and $y$-axis rotating with $y$-axis.

\begin{figure}[!htbp]
\begin{center}
\begin{minipage}{6cm}
\includegraphics[width=6cm,height=4cm]{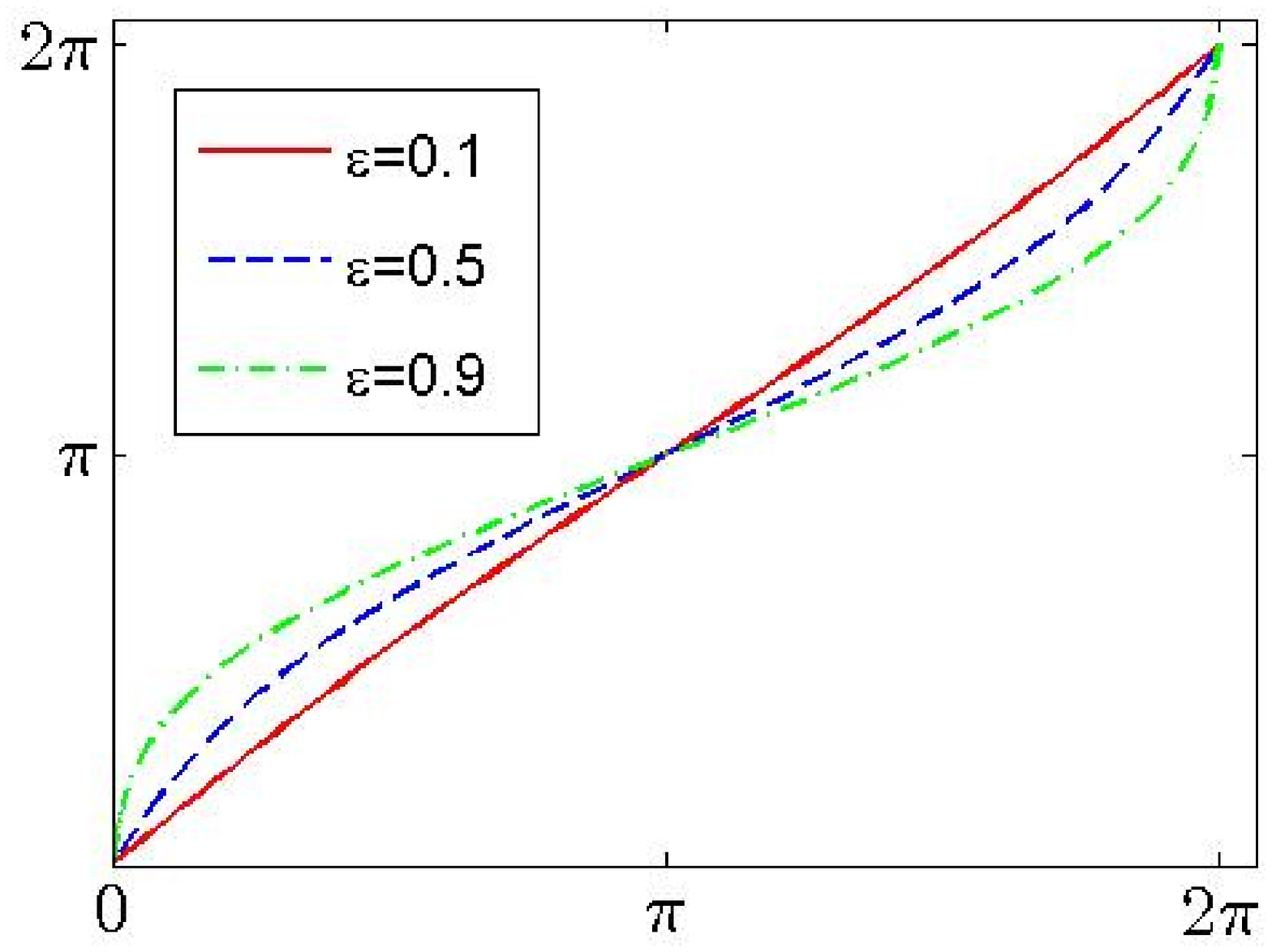}
\caption{Kepler equation $y=x + \varepsilon \sin y$, $\varepsilon \in (0,1)$.}
\end{minipage}
\quad
\begin{minipage}{6cm}
\includegraphics[width=6cm,height=4cm]{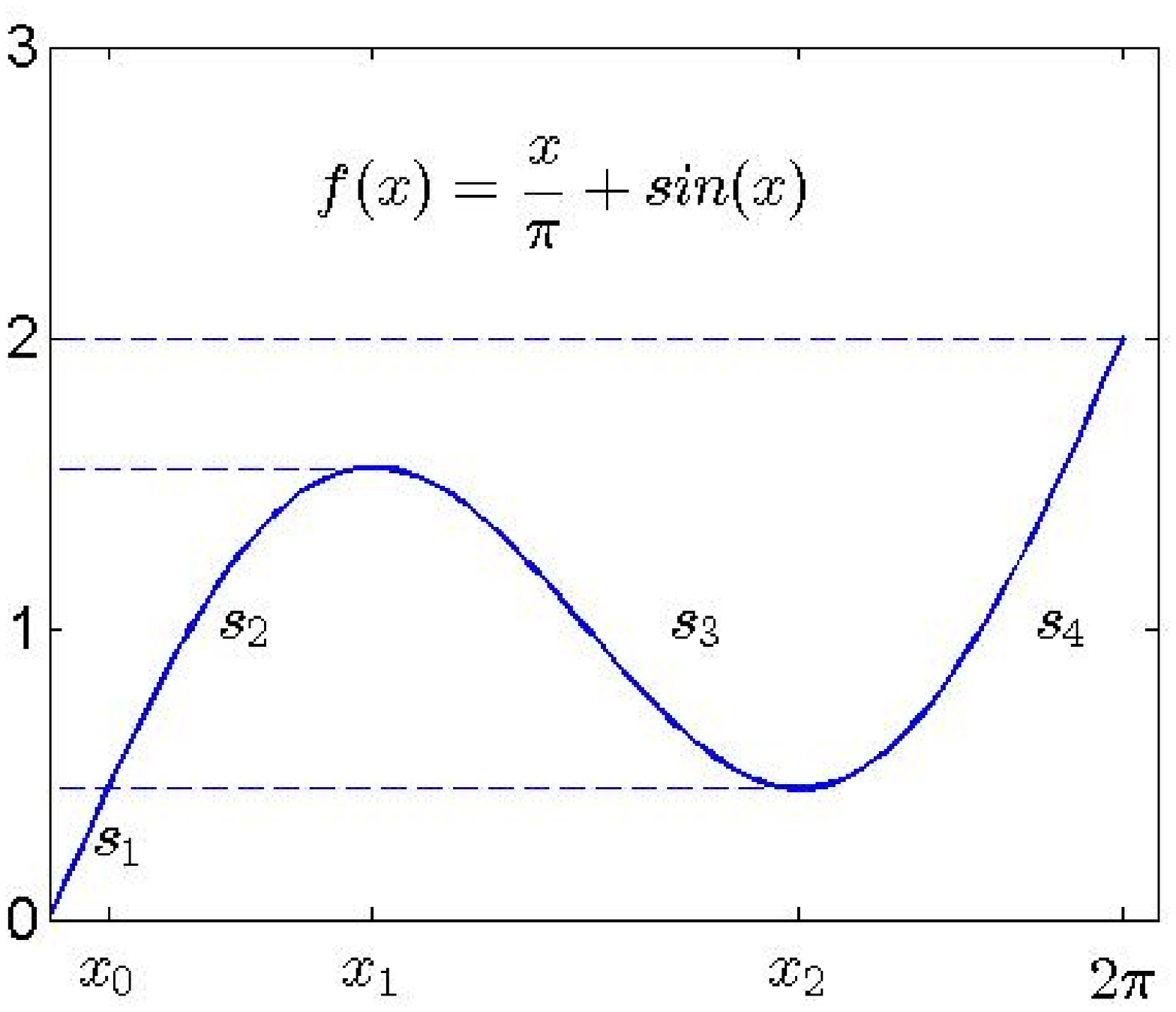}
\caption{Piecewise monotone intervals of $f(x) =\displaystyle \frac{x}{\pi}+ \sin x$,$0 \le x \le 2\pi $.}
\end{minipage}
\end{center}
\end{figure}

\textbf{Solution 1}: We give the classical solution.

(1) Let $f'(x) =\displaystyle (\frac{x}{\pi}+ \sin x)'=\displaystyle \frac{1}{\pi}+ \cos x=0$,
we obtain two extreme value points, $x_1 =\displaystyle \arccos (- \frac{1}{\pi })$ and $x_2 =\displaystyle  2\pi - \arccos (- \frac{1}{\pi})$.
And,
\[
f(x_1 ) =\displaystyle \frac{1}{\pi }\arccos (- \frac{1}{\pi }) + \sin (\arccos (-\frac{1}{\pi })),
\]
\[
f(x_2 ) =\displaystyle \frac{1}{\pi }[2\pi - \arccos (- \frac{1}{\pi })] - \sin (\arccos(- \frac{1}{\pi })).
\]

(2) Denote:
\[
\begin{array}{ll}
s_1 :y=f(x), 0 \le x \le x_0 .           &  s_2 :y=f(x), x_0 \le x \le x_1 .\\
s_1 + s_2 :y=f(x),    0 \le x  \le x_1 . & \\
s_3 :y=f(x), x_1 \le x \le x_2 .           & s_4 :y=f(x),    x_2 \le x \le x_3 .\\
\end{array}
\]

The volumes of curved trapezoids of $s_1, s_2, s_3, s_4, s_1+s_2$ in term of $y$-axis rotating with $y$-axis are denoted as $V_{s_1}$,$V_{s_2}$,$V_{s_3}$ ,$V_{s_4}$,$V_{s_1 + s_2}$,and the volume of curved trapezoid $y=f(x)$ ($0\le x \le 2\pi$), $y=0$, $y=2$, $x=0$ rotating
with $y$-axis is $V_Y$.Then,
\begin{equation} \label{eq12}
\begin{array}{ll}
V_Y &=\displaystyle V_{s_4}- (V_{s_3}- V_{s_2}) + V_{s_1 }=V_{s_4}- V_{s_3}+ (V_{s_2}+ V_{s_1}) \\
  &=\displaystyle V_{s_4}- V_{s_3}+ V_{s_1 + s_2 }\\
\end{array}
\end{equation}

According to Theorem 1,
\begin{equation} \nonumber
\begin{array}{ll}
 V_{s_1 + s_2 }&=\displaystyle \pi x_1 ^2f(x_1 )-0 - 2\pi \int_0^{x_1}{x(\frac{x}{\pi }+ \sin x)dx} \\
               &=\displaystyle \pi x_1^2f(x_1 ) - 2\pi [\frac{x^3}{3\pi}+ (\sin x - x\cos x)]\vert_0^{x_1}\\
               &=\displaystyle \pi x_1 ^2f(x_1 ) - 2\pi [\frac{x_1 ^3}{3\pi}+ (\sin x_1 - x_1 \cos x_1 )] \\
 \end{array}
\end{equation}
\begin{equation} \nonumber
\begin{array}{ll}
 V_{s_3}&=\displaystyle 2\pi \int_{x_1 }^{x_2}{x(\frac{x}{\pi}+ \sin x)dx} - \pi [x_2^2 f(x_2 ) - x_1^2 f(x_1 )] \\
          &=\displaystyle 2\pi [\frac{x^3}{3\pi}+(\sin x - x\cos x)]\vert_{x_1 }^{x_2}- \pi [x_2^2 f(x_2 ) - x_1^2 f(x_1)] \\
          &=\displaystyle 2\pi [\frac{x_2 ^3}{3\pi }+ (\sin x_2 - x_2 \cos x_2 )] - 2\pi [\frac{x_1 ^3}{3\pi}+ (\sin x_1 - x_1
            \cos x_1 )] - \pi [x_2^2 f(x_2 ) - x_1^2 f(x_1 )] \\
 \end{array}
\end{equation}
\begin{equation} \nonumber
\begin{array}{ll}
 V_{s_4} &=\displaystyle \pi [x_3^2 f(x_3 ) - x_2^2 f(x_2 )] - 2\pi \int_{x_2 }^{x_3}{x(\frac{x}{\pi}+ \sin x)dx} \\
          &=\displaystyle \pi [x_3^2 f(x_3 ) - x_2^2 f(x_2 )] -2\pi [\frac{x^3}{3\pi}+ (\sin x - x\cos x)]\vert_{x_2 }^{x_3}\\
          &=\displaystyle \pi [x_3^2 f(x_3 ) - x_2^2 f(x_2 )] - 2\pi [\frac{x_3 ^3}{3\pi}+ (\sin x_3 - x_3 \cos x_3 )] + 2\pi [\frac{x_2^3}{3\pi}+ (\sin x_2- x_2 \cos x_2 )] \\
 \end{array}
\end{equation}

Substituting $V_{s_1 + s_2}$,$V_{s_3}$,$V_{s_4}$,$f(x_3 )=f(2\pi )=2$  into formula (12), we obtain
\[
\begin{array}{ll}
 V_Y &=\displaystyle V_{s_4}- V_{s_3}+ V_{s_1 + s_2}=\displaystyle \pi x_3^2 f(x_3 ) - 2\pi[\frac{x_3 ^3}{3\pi}+ (\sin x_3 - x_3 \cos x_3 )] \\
   &=\displaystyle 8\pi ^3 - 2\pi[\frac{8\pi ^2}{3} - 2\pi ] =\displaystyle \frac{8\pi ^3}{3} + 4\pi ^2 \\
 \end{array}
\]

\textbf{Solution 2}:  Using formula (8), the volume of solids of revolution is
\[
\begin{array}{ll}
V_Y &=\displaystyle \mbox{sgn}(f(b) - f(a))\left\{ {\pi \left[ {b^2f(b) - a^2f(a)} \right] - 2\pi\int_a^b {xf(x)dx}}\right\}\\
  &=\displaystyle \left\{ {\pi \left[ {(2\pi )^2f(2\pi ) - 0} \right] - 2\pi \int_0^{2\pi }{x(\frac{x}{\pi}+ \sin x)dx}}\right\} \\
  &=\displaystyle 8\pi ^3 - 2\pi [\frac{x^3}{\pi}+ \sin x - x\cos x]\vert_0^{2\pi}\\
  &=\displaystyle 8\pi ^3 - 2\pi [\frac{8\pi ^2}{3} - 2\pi ]=\frac{8\pi ^3}{3} + 4\pi ^2 \\
 \end{array}
\]

Similar to Example 2, interchanging $x$ and $y$, we can obtain an example rotating with $x$-axis using Theorem 3.

\textbf{Example 3 }To calculate the volume of curved trapezoid of $ x =g(y)=\displaystyle \frac{y}{\pi}+ \sin y$
($0 \le y \le 2\pi )$, $x=0,x=2,y=0$ rotating with $x$-axis.

(The solution is similar to Example 2, we omit the detail. )

\begin{acknowledgment}{Acknowledgment.}
This paper is partially supported by 863 Project of China (2008AA02Z306), and Major Program of the National Natural Science Foundation of China (No.61327807).
\end{acknowledgment}

\vfill\eject


\begin{thebibliography}{1}
\bibitem{Stewart} J. Stewart. \textit{Calculus} (seventh edition). Brooks/Cole, Cengage Learning. 2012. 343-494.
\bibitem{Zorich} V. A. Zorich. \textit{Mathematical Analysis I }(4th edition). Springer-Verlag Berlin Heidelberg. 2004. 374-385.
\bibitem{Hall} A. Hall. Kepler's Problem. \textit{Annals of Mathematics}.\textbf{10}~(3) (1883) 65--66.
\end{thebibliography}
\end{document}